\documentclass[12pt]{article}
\usepackage{amsmath, amsthm, amsfonts}
\def\fullpage {
\addtolength{\topmargin}{-2 cm}
\addtolength{\oddsidemargin}{-0.9cm} \addtolength{\textwidth}{+2 cm}
\addtolength{\textheight}{+4 cm}}
\fullpage
\setlength{\parindent}{13pt}
\parskip=6pt

\newtheorem{thm}{Theorem}
\newtheorem*{thmchernoff}{Chernoff Bound}
\newtheorem*{setup}{Setup}

\newtheorem{claim}[thm]{Claim}

\theoremstyle{remark}
\newtheorem{remark}[thm]{Remark}
\theoremstyle{definition}
\newtheorem*{definition}{Definition}
\newcommand{\Ex}{\mathop{\bf E\/}}
\newcommand{\I}{\mathop{\bf I\/}}

\def \Saarbrucken {{Saarbr\"{u}cken}}

\begin{document}
\title{Counting independent sets in hypergraphs}
\author{Jeff Cooper\thanks{Department of Mathematics, Statistics, and Computer
Science, University of Illinois at Chicago, IL 60607;  email:
jcoope8@uic.edu} , 
Kunal Dutta\thanks{Algorithms and Complexity Department, Max Planck 
Institute for Informatics, 
\Saarbrucken, Germany. (part of this work was done at: 
Indian Statistical Institute, New Delhi, India);  
email: kdutta@mpi-inf.mpg.de}, 
Dhruv Mubayi\thanks{Department of Mathematics, Statistics, and Computer
Science, University of Illinois at Chicago, IL 60607;  
Research partially supported by NSF grants DMS 0969092 and 1300138; email:
mubayi@uic.edu}
}
\maketitle

\begin{abstract}
Let $G$ be a triangle-free graph with $n$ vertices and average degree $t$.
We show that $G$ contains at least
\[
e^{(1-n^{-1/12})\frac{1}{2}\frac{n}{t}\ln t (\frac{1}{2}\ln t-1)}
\]
independent sets. 
This improves a recent result of the first and third authors \cite{countingind}.
In particular, it implies that as $n \to \infty$, every triangle-free graph
on $n$ vertices has at least $e^{(c_1-o(1)) \sqrt{n} \ln n}$
independent sets, where $c_1 = \sqrt{\ln 2}/4 = 0.208138..$.
Further, we show that for all $n$, there exists a 
triangle-free graph with $n$ vertices which
has at most $e^{(c_2+o(1))\sqrt{n}\ln n}$ 
independent sets, where $c_2 = 1+\ln 2 = 1.693147..$.
This disproves a conjecture from \cite{countingind}.

Let $H$ be a $(k+1)$-uniform linear hypergraph with $n$ vertices
and average degree $t$. We also show that there exists a constant $c_k$ such that
the number of independent sets in $H$ is at least
\[
e^{ c_{k} \frac{n}{t^{1/k}}\ln^{1+1/k}{t} }.
\]
This is tight apart from the constant $c_k$ and
generalizes a result of Duke, Lefmann, and R\"odl \cite{uncrowdedrodl},
which guarantees the existence of an independent set
of size $\Omega(\frac{n}{t^{1/k}} \ln^{1/k}t)$.
Both of our lower bounds follow from a more general statement,
which applies to hereditary properties of hypergraphs.

\end{abstract}
\section{Introduction}
An independent set in a graph $G = (V,E)$ is a set $I \subset V$ of
vertices such that no two vertices in $I$ are adjacent. The independence
number of $G$, denoted $\alpha (G)$, is the size of the largest
independent set in $G$. 
\begin{definition}Given a graph $G$, $i(G)$ is the number of independent sets in $G$.
\end{definition}
In \cite{trifreeaks}, Ajtai, Koml\'os, and
Szemer\'edi gave a semi-random algorithm for finding an independent set of size
at least $\frac{n}{100t}\ln t$ in any triangle-free graph $G$ with
$n$ vertices and average degree $t$. 
By analyzing their algorithm, the first and third authors \cite{countingind}
recently showed that for any such graph,
\begin{equation}\label{cmbound}
i(G) \geq 2^{\frac{1}{2400} \frac{n}{t} \log^2_2 t}.
\end{equation}
As a consequence, they proved that every triangle-free graph has at least 
$2^{\Omega(\sqrt{n}\ln n)}$ independent sets and conjectured that this 
could be improved to $2^{\Omega(\sqrt{n}\ln^{3/2}n)}$, based on the best 
constructions of Ramsey graphs by Kim \cite{trifreekim}.

In this paper, we give a simpler proof of \eqref{cmbound}, which substantially improves 
the constant in the exponent and avoids any analysis of the algorithm in
\cite{trifreeaks}. Further, we show that our bound is not far from 
optimal, by disproving the conjecture in \cite{countingind} and constructing
a triangle-free graph with at most $2^{O(\sqrt{n}\ln n)}$ independent sets.
The construction is obtained by modifying the graph obtained by the triangle-free process. Our bounds  follow from the detailed  analysis of this process by Bohman-Keevash \cite{bohmankeevash13}
and Fiz Pontiveros-Griffiths-Morris \cite{trifreelong13}.

All logarithms are to the base $e$, unless explicitly mentioned otherwise.

\begin{thm}\label{thmtrifreegrphs}
Let $G$ be a triangle-free graph with $n$ vertices and average degree $t$.
Then
\[
i(G) \geq \max\{e^{(1-n^{-1/12})\frac{1}{2}\frac{n}{t}\ln t (\frac{1}{2}\ln(t)-1)}, 2^t\}.
\]
Consequently, for every triangle-free graph $H$ on $n$ vertices,
\[
i(H) \geq e^{(1-o(1)) \frac{\sqrt{n\ln 2}\ln n}{4}}.
\]
\end{thm}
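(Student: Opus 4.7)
The plan is to establish the two lower bounds inside the $\max$ separately, then derive the ``consequently'' statement by optimizing over the average degree $t$.

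The $2^t$ bound is immediate. Since $G$ has average degree $t$, some vertex $v$ satisfies $d(v)\ge t$. Triangle-freeness forces $N(v)$ to be an independent set in $G$, so each of the $2^{d(v)}\ge 2^t$ subsets of $N(v)$ is an independent set, proving $i(G)\ge 2^t$.

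For the main analytic bound I would follow the hint in the abstract that Theorem~\ref{thmtrifreegrphs} is a special case of a general theorem about counting subsets satisfying a hereditary property. The strategy I would pursue is to define a random independent set $I\subseteq V(G)$ via a carefully designed distribution $\mu$ and bound $\log i(G)\ge H(\mu)$. A natural candidate is a two-stage construction: first sample a random subset $R\subseteq V(G)$ by including each vertex independently with probability $p\approx (\ln t)/t$, then greedily extract an independent subset of $G[R]$ in a uniformly random vertex order. Using the chain rule $H(I)=\sum_v H(X_v\mid X_{<v})$ and a local lower bound on each conditional entropy (derived from the fact that in a triangle-free graph each neighborhood is independent, so the candidate set at each step remains large), summing over $v$ and applying a convexity argument in the degree sequence should produce the main term $\frac{1}{2}\frac{n}{t}\ln t\,(\frac{1}{2}\ln t - 1)$. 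The multiplicative $(1-n^{-1/12})$ factor would come from a concentration estimate---Chernoff or McDiarmid, whose templates already appear in the preamble---used to control atypical vertices that must be discarded. The main obstacle will be establishing the sharp constant $\frac{1}{4}$ in front of $\frac{n\ln^2 t}{t}$, a major improvement over the $1/2400$ of \eqref{cmbound}: one must exploit triangle-freeness beyond simply ``$N(v)$ is independent,'' at each step arguing that forbidden neighbors are sufficiently spread out so that many independent-set extensions survive. This is what should upgrade the $\ln t$ in the classical Shearer bound $\alpha(G)\ge \frac{n\ln t}{t}$ to the $\ln^2 t$ of our counting bound.

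For the ``consequently'' statement I would simply optimize the $\max$ over $t$. The two bounds cross when $t\ln 2 = \frac{n\ln^2 t}{4t}$, i.e., at $t^*\sim \frac{\sqrt n\,\ln n}{4\sqrt{\ln 2}}$ (using $\ln t^*\sim \frac{1}{2}\ln n$), and at this crossover both quantities equal $t^*\ln 2 \sim \frac{\sqrt{n\ln 2}\,\ln n}{4}$. Splitting on whether the given graph's average degree lies above or below $t^*$, and using that the analytic bound is decreasing in $t$ for $t>e^2$ while $2^t$ is increasing, shows that the crossover is the worst case and yields the advertised constant $c_1=\sqrt{\ln 2}/4$.
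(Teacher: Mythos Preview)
Your handling of the $2^t$ bound and the ``consequently'' optimization are both correct and match the paper. The gap is in your plan for the main analytic bound.

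The paper's argument is much simpler than the entropy method you sketch, and in particular it does \emph{not} ``exploit triangle-freeness beyond simply $N(v)$ is independent.'' Triangle-freeness enters only through Shearer's bound $\alpha(G)\ge\frac{n}{t}(\ln t-1)$, used as a black box. The idea is this: sample each vertex independently with probability $p=t^{-1/2}$ (not $(\ln t)/t$). With high probability the induced subgraph $H'$ has $n'\approx np$ vertices and average degree $t'\approx tp=t^{1/2}$; applying Shearer to $H'$ produces an independent set of size at least
\[
g\approx\frac{n'}{t'}(\ln t'-1)\approx\frac{n}{t}\Bigl(\tfrac12\ln t-1\Bigr).
\]
Now the key counting step: if $N$ is the number of independent sets of size $\ge g$ in $G$, then each such set survives into $H'$ with probability at most $p^g$, so the expected number surviving is at most $Np^g$. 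Since with probability $1-o(1)$ at least one survives, Markov gives $N\ge(1-o(1))p^{-g}=e^{\frac12\ln t\cdot g}$, which is exactly the claimed exponent. The $(1-n^{-1/12})$ factor comes from Chernoff on $n'$ and a Kim--Vu polynomial concentration bound on the edge count of $H'$; for $n<n_0$ one bootstraps by applying the result to a disjoint union of copies of $G$.

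So the ``upgrade from $\ln t$ to $\ln^2 t$'' that you flag as the main obstacle is not achieved by a sharper local analysis of triangle-freeness, but by this sample-then-Markov trick, which mechanically converts any independence-number bound $\alpha\ge\frac{n}{t}f(t)$ into a count $i(G)\ge e^{c\,\frac{n}{t}f(\sqrt t)\ln t}$. Your entropy outline, with $p\approx(\ln t)/t$ and greedy random-order extraction, does not visibly reach the constant $\tfrac14$: with that $p$ the sampled subgraph has average degree $\approx\ln t$, and neither greedy nor Shearer on it gives a set of the required size $\frac{n}{t}\cdot\Theta(\ln t)$.
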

The constant in the exponent above is $\sqrt{\ln 2}/4 \approx 0.2081$. As we show below it is not far from optimal as we have an upper bound with exponent $1+\ln 2\approx 1.693$.
\begin{thm}\label{thmtrifreegrphsupper}
For all $n$, there exists a triangle-free graph $G$ on $n$ vertices with
\[
i(G) \leq e^{(1+o(1))(1+\ln 2)\sqrt{n}\ln n}.
\] 
\end{thm}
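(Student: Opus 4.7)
}
The plan is to realise $G$ as an independent-set blow-up of a typical output of the triangle-free process, with parameters chosen so that the constant $1+\ln 2$ drops out exactly. Given $n$, choose integers $m = m(n)$ and $r = r(n)$ satisfying $rm = n$, with $r\sim\tfrac{1}{2}\ln m$ (equivalently $m\ln m\sim 2n$, so $m\sim 2n/\ln n$ and $r\sim\tfrac{1}{2}\ln n$). Let $H$ be the random graph produced by running the triangle-free process on $m$ vertices. The detailed analyses of Bohman--Keevash~\cite{bohmankeevash13} and Fiz Pontiveros--Griffiths--Morris~\cite{trifreelong13} imply that with positive probability $H$ is triangle-free and
\[
\alpha(H) \leq (\sqrt{2}+o(1))\sqrt{m\ln m} = (2+o(1))\sqrt{n}.
\]
Form $G$ from $H$ by replacing each vertex $v\in V(H)$ with an independent set $V_v$ of $r$ new vertices, and each edge $uv\in E(H)$ with the complete bipartite graph between $V_u$ and $V_v$; thus $|V(G)| = rm = n$, and $G$ is triangle-free, because any triangle in $G$ would lie in three pairwise-adjacent clouds and hence project to a triangle in $H$.

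The key observation is that each independent set $I\subseteq V(G)$ is uniquely determined by its projection $\pi(I) = \{v\in V(H) : V_v\cap I\neq\emptyset\}$---which must itself be independent in $H$---together with a non-empty subset of $V_v$ for each $v\in\pi(I)$. Hence
\[
i(G) \;=\; \sum_{S\text{ indep in }H}(2^r-1)^{|S|} \;\leq\; (\alpha(H)+1)\binom{m}{\alpha(H)}\,2^{r\alpha(H)}.
\]
Taking natural logarithms and using $\binom{m}{\alpha}\leq(em/\alpha)^{\alpha}$, the exponent is at most $\alpha(H)\bigl(\ln(em/\alpha(H)) + r\ln 2\bigr) + O(\ln m)$. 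With the chosen parameters, $\alpha(H)\sim 2\sqrt{n}$, $\ln(em/\alpha(H))\sim\tfrac{1}{2}\ln m$, and $r\ln 2 \sim \tfrac{\ln 2}{2}\ln m$, so the two summands inside the bracket add to $\tfrac{1+\ln 2}{2}\ln m$ and the whole exponent becomes $\sim 2\sqrt{n}\cdot\tfrac{1+\ln 2}{2}\ln n = (1+\ln 2)\sqrt{n}\ln n$, as required.

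The hard part is the independence-number estimate $\alpha(H)\leq(\sqrt{2}+o(1))\sqrt{m\ln m}$ for the triangle-free process, which is the deep content of \cite{bohmankeevash13,trifreelong13}; I invoke it as a black box. Everything else---the triangle-freeness of the blow-up, the bijective identity for $i(G)$, and the elementary arithmetic producing the constant $1+\ln 2$---is routine. The somewhat delicate point is the balancing of parameters: the choice $r\sim\tfrac{1}{2}\ln m$ is precisely what makes the two contributions $\ln(em/\alpha)\sim\tfrac{1}{2}\ln m$ and $r\ln 2\sim \tfrac{\ln 2}{2}\ln m$ combine to $\tfrac{1+\ln 2}{2}\ln m$, and hence yields the advertised constant.
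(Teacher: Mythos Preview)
Your proposal is correct and follows essentially the same approach as the paper: blow up a typical output of the triangle-free process by independent sets of size $r\sim\tfrac{1}{2}\ln m$, then bound $i(G)$ by summing over projections to independent sets in the base graph and invoking the Bohman--Keevash / Fiz Pontiveros--Griffiths--Morris bound $\alpha\le(\sqrt{2}+o(1))\sqrt{m\ln m}$. Your exact identity $i(G)=\sum_{S}(2^r-1)^{|S|}$ is in fact slightly sharper than the paper's inequality $i(G')\le\sum_{I}2^{r|I|}$, and your explicit remark on why the blow-up stays triangle-free is a detail the paper leaves implicit; the arithmetic and the resulting constant $1+\ln 2$ match exactly.
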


Using random graphs, one can show that for $t<n^{1/3}$, there is a
triangle-free graph $G$ with independence number at most $(2n/t)\ln t$.
Consequently,
\[
i(G) \leq
\sum_{i=1}^{\alpha(G)} \binom{n}{i} 
\leq 2 \binom{n}{\alpha(G)}
\leq 2 \left(\frac{t e}{2\ln t}\right)^{\frac{2n}{t}\ln t}
< 2 e^{\ln{(te)} \frac{2n}{t}\ln t}
= e^{(1+o(1))\frac{2n}{t}\ln^2 t},
\]
so the constant in the exponent of Theorem 
\ref{thmtrifreegrphs} is within a factor of $8$ of the
best possible constant.


\subsection{Linear hypergraphs}
Fix $k \geq 1$. Using the semi-random method, 
Ajtai, Koml\'os, Pintz, Spencer, and Szemer\'edi \cite{uncrowdedakpss} 
showed that there exists $c_k$ such that every
$(k+1)$-uniform hypergraph $H$ with $n$ vertices, average degree $t$, and girth 5
satisfies $\alpha(H) \geq c_k \frac{n}{t^{1/k}} \ln^{1/k}t$. 
A hypergraph is \emph{linear} (or has girth 3) if any two edges intersect
in at most one vertex.
Duke, Lefmann, and R\"odl \cite{uncrowdedrodl} (using the result of \cite{uncrowdedakpss})
showed that there exists $c'_k$ such that 
every linear $(k+1)$-uniform hypergraph $H$ with $n$ vertices and average degree $t$ 
satisfies
\[
\alpha(H) \geq c'_k \frac{n}{t^{1/k}} \ln^{1/k}t.
\]
This leads to our second theorem.

\begin{thm} \label{thmlowerbdlinhypgr}
Fix $k \geq 1$. There exists $c_k'' > 0$ such that the following holds:
For every $(k+1)$-uniform, linear hypergraph $H$ on $n$ vertices with average degree $t$,
\begin{equation}\label{ilinear}
i(H) \geq e^{ c''_k \frac{n}{t^{1/k}}\ln^{1+1/k}{t} }.
\end{equation}
\end{thm}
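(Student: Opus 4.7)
The plan is to bootstrap the Duke--Lefmann--R\"odl independence number bound by applying it to random induced subhypergraphs of a carefully chosen size, and then to convert the resulting lower bound on $\alpha$ into a lower bound on $i$ via a double-counting argument. Both linearity and ``being an independent set'' are hereditary, so restricting to a random subset of $V(H)$ loses nothing. Note that the naive extraction $i(H)\ge 2^{\alpha(H)}$ from Duke--Lefmann--R\"odl only yields $\exp(\Omega(n t^{-1/k}\ln^{1/k} t))$, one logarithmic factor short of the target; the extra $\ln t$ will have to come from the geometric growth $\binom{n}{f}/\binom{m}{f} \approx (n/m)^f$.

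Concretely, set $m := n/t^{1/(k+1)}$ and let $S$ be a uniformly random $m$-subset of $V(H)$. Since $|E(H)| = tn/(k+1)$, the expected average degree of $H[S]$ is $t(m/n)^k = t^{1/(k+1)}$, which still grows with $t$. By Markov's inequality, at least half of the $m$-subsets $S$ (call them \emph{good}) satisfy $d_{\mathrm{avg}}(H[S]) \le 2 t^{1/(k+1)}$; for each such $S$, the induced hypergraph $H[S]$ is $(k+1)$-uniform and linear, so the Duke--Lefmann--R\"odl theorem applied to $H[S]$ produces an independent set of size
\[ f := \Omega\left(\frac{m}{(t^{1/(k+1)})^{1/k}} \, \ln^{1/k}(t^{1/(k+1)})\right) = \Omega\left(\frac{n}{t^{1/k}}\ln^{1/k} t\right). \]

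To pass from $\alpha$ to $i$, double count pairs $(S, I)$ with $S$ a good $m$-subset and $I \subseteq S$ an independent set of $H$ of size $f$: the count is at least $\frac{1}{2}\binom{n}{m}$ (one $I$ per good $S$) and at most $i_f(H)\binom{n-f}{m-f}$ (each independent $f$-set has $\binom{n-f}{m-f}$ size-$m$ supersets). Using the termwise inequality $\binom{n}{f}/\binom{m}{f} \ge (n/m)^f$ (which follows from $(n-i)/(m-i)\ge n/m$ for $0\le i < f \le m \le n$), we obtain
\[ i(H) \ge i_f(H) \ge \frac{1}{2} \cdot \frac{\binom{n}{f}}{\binom{m}{f}} \ge \frac{1}{2}\left(\frac{n}{m}\right)^{f} = \exp\big(\Omega(f \ln t)\big) = \exp\left(\Omega\left(\frac{n}{t^{1/k}}\ln^{1+1/k} t\right)\right), \]
which is the desired inequality for a suitable $c''_k>0$.

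The main conceptual obstacle is choosing $m$ correctly: too small makes $H[S]$ too sparse for Duke--Lefmann--R\"odl to give anything (since $t(m/n)^k$ must tend to infinity with $t$), while too large kills the gain factor $(n/m)^f$. The balance $m = n/t^{1/(k+1)}$ is essentially the unique scaling that keeps the induced average degree $t^{1/(k+1)}$ growing with $t$ while simultaneously maximising $(n/m)^f = t^{f/(k+1)}$, and this is precisely where the advertised $\ln^{1+1/k} t$ exponent appears. The remaining technicalities are minor: handling the small-$t$ regime, where Duke--Lefmann--R\"odl is inapplicable, is absorbed into the constant $c''_k$, and the termwise estimate $\binom{n}{f}/\binom{m}{f} \ge (n/m)^f$ is a one-line calculation.
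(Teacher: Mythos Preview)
Your argument is correct and follows the same strategy as the paper: pass to a random induced subhypergraph at density $p = t^{-1/(k+1)}$, apply Duke--Lefmann--R\"odl inside it to find an independent set of size $f = \Omega\!\bigl(\tfrac{n}{t^{1/k}}\ln^{1/k}t\bigr)$, and then recover the count $i(H)$ via a first-moment/counting argument (your double count of pairs $(S,I)$ is equivalent to the paper's use of $\Pr[N'\ge 1]\le \Ex[N']\le N p^{g}$).

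The one genuine difference is in how you control the average degree of the random subhypergraph. The paper samples each vertex independently with probability $p$ and then invokes the Kim--Vu polynomial concentration inequality to show that the induced edge count is close to its mean with probability $1-o(1)$; this forces a side case-split on the maximum degree (their Case~2). You instead sample a uniform $m$-set and apply only Markov's inequality to the edge count, which already gives a ``good'' subset with probability $\ge 1/2$ and requires no assumption on the maximum degree. This is strictly more elementary and shorter; the price you pay is a constant-factor loss (the $\tfrac12$ and the $2$ in $2t^{1/(k+1)}$), which is irrelevant for the statement of Theorem~\ref{thmlowerbdlinhypgr} but would not suffice for the sharper error term $(1-n^{-\epsilon/21})$ that the paper obtains in its more general Theorem~\ref{thmheredprop}.
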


In \cite{uncrowdedakpss}, Ajtai, Koml\'os, Pintz, Spencer, and Szemer\'edi
observed that, for infinitely many $t$ and $n$, there exists a 
$(k+1)$-uniform, linear hypergraph $H$ with $n$ vertices, 
average degree $t$, and independence number at most $b'_k \frac{n}{t^{1/k}}\ln^{1/k}t$.
For this hypergraph,
\[
i(H) \leq e^{b''_k \frac{n}{t^{1/k}}\ln^{1+1/k}t},
\]
so \eqref{ilinear} is tight up to the constant in the exponent.

\subsection{Hereditary Properties}

In \cite{twise},
Colbourn, Hoffman, Phelps, R\"odl, and Winkler counted the number of partial 
$S(t,t+1,n)$ Steiner systems by analyzing a semi-random algorithm; 
Using the same techniques, Grable and Phelps \cite{pssgrable} extended their result to partial
$S(t,k,n)$ Steiner systems.
Asratian and Kuzjurin \cite{kuzsteiner} gave a simpler proof of the bound in \cite{pssgrable},
which avoids any algorithm analysis. 
Theorems \ref{thmtrifreegrphs} and \ref{thmlowerbdlinhypgr} both follow
from a more general result (Theorem \ref{thmheredprop} below), which is based on this simpler proof. 
Since our proof avoids any analysis of how 
the independent sets are obtained, we are able to extend the bound in 
\cite{countingind} from triangle-free graphs to
a more general hypergraph setting. Recall that a \emph{hereditary property}
$\mathcal{P}$ of hypergraphs is any set of hypergraphs which is closed under
vertex-deletion.

\begin{thm}\label{thmheredprop}
Fix $k \geq 1$ and $\epsilon \in (0, \frac{4}{k+1})$. 
Let $\mathcal{P}$ be any hereditary hypergraph property.
Suppose there exists a non-decreasing function $f$ so that
every $(k+1)$-uniform hypergraph $H \in \mathcal{P}$ with $n$ vertices and
average degree at most $t$ satisfies
\[
\alpha(H) \geq \frac{n}{t^{1/k}}f(t).
\]
Then there exists $n_0 = n_0(\epsilon)$ such that
every $(k+1)$-uniform hypergraph $H \in \mathcal{P}$ with $n \geq n_0$ vertices
and average degree at most $t < n^k$ satisfies
\[
i(H) \geq e^{\alpha' \frac{n}{t^{1/k}}\ln t},
\]
where
\[
\alpha' = 
\begin{cases}
(1-n^{-\epsilon/21})\frac{1}{k+1}f(t^{\frac{1}{k+1}}), & \text{if $H$ is linear}\\
(1-n^{-\epsilon/21})\frac{1-\epsilon}{k(2k+1)}f(t^{\frac{2k+\epsilon}{2k+1}}), & \text{otherwise}.
\end{cases}
\]
\end{thm}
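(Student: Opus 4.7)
The plan is to combine random sub-sampling with a double-counting argument, in the spirit of Asratian--Kuzjurin's treatment of partial Steiner systems. I would first select a uniformly random subset $S \subseteq V(H)$ of a carefully chosen size $s = pn$. Since $\mathcal{P}$ is hereditary, $H[S] \in \mathcal{P}$, so the hypothesised $\alpha$-bound applies directly to $H[S]$: if $d(S)$ denotes the average degree of $H[S]$, then $\alpha(H[S]) \geq \frac{|S|}{d(S)^{1/k}}\, f(d(S))$.

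Next I would use concentration inequalities (Chernoff for $|S|$ when sampling independently, and a bounded-differences/McDiarmid-style argument for the edge count $|E(H[S])|$) to show that with very high probability, $d(S)$ is within a $(1+o(1))$ factor of its expectation $p^k t$ and $|S| \approx pn$. In the linear setting this is clean, since any two edges share at most one vertex, so toggling a single vertex in $S$ alters $|E(H[S])|$ by at most the degree of that vertex---at most $t$ on average. In the general (non-linear) setting, pairs of edges can share up to $k$ vertices, and the larger variance forces $p$ closer to $1$; the extra slack $\epsilon < \tfrac{4}{k+1}$ is exactly what is needed to absorb this into the concentration estimate. On the good event, the bound becomes $\alpha(H[S]) \geq (1-n^{-\epsilon/21})\,\frac{n\, f(p^k t)}{t^{1/k}} =: \alpha^*$.

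The key counting step is as follows. Set $m := \lfloor \alpha^* \rfloor$, so that every good $S$ contributes at least one independent set of $H$ of size $m$ contained in $S$. Double-counting the pairs $(S, I)$ with $|S|=s$, $I \subseteq S$, $|I|=m$, and $I$ independent in $H$ gives
\[
(1-o(1))\binom{n}{s} \;\leq\; |\mathcal{I}_m(H)|\cdot \binom{n-m}{s-m},
\]
which rearranges to $|\mathcal{I}_m(H)| \geq (1-o(1))\binom{n}{m}/\binom{s}{m} \approx (1-o(1))(n/s)^{m}$ once $m \ll s$. Taking logarithms yields $\ln i(H) \geq \ln |\mathcal{I}_m(H)| \geq (1-o(1))\,\alpha^*\ln(1/p)$. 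I would then set $p = t^{-1/(k+1)}$ in the linear case, making $p^k t = t^{1/(k+1)}$ and $\ln(1/p) = \tfrac{\ln t}{k+1}$, and $p = t^{-(1-\epsilon)/(k(2k+1))}$ in the general case, making $p^k t = t^{(2k+\epsilon)/(2k+1)}$ and $\ln(1/p) = \tfrac{(1-\epsilon)\ln t}{k(2k+1)}$. Both choices reproduce the stated $\alpha'$.

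The main obstacle is the concentration of $|E(H[S])|$ in the non-linear case, where one must account for pairs of edges sharing $2, 3, \ldots, k$ common vertices; this is precisely the reason the exponent drops from $\tfrac{1}{k+1}$ to $\tfrac{1}{k(2k+1)}$, since $p$ must be pushed closer to $1$ to control the variance, shrinking $\ln(1/p)$. A secondary technical point is verifying $\alpha^* \ll s$ so that $\binom{n}{m}/\binom{s}{m}$ is genuinely $(1-o(1))(n/s)^{m}$---this is where the hypothesis $n \geq n_0(\epsilon)$ enters, and the constant $21$ in the exponent of the $(1-n^{-\epsilon/21})$ factor will be produced by tracking the accumulated losses from the Chernoff tail, the binomial ratio approximation, and the non-decreasing nature of $f$.
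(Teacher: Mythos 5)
Your overall architecture --- randomly induce a subhypergraph $H[S]$, use the hereditary hypothesis to find one large independent set in it with high probability, then count how many large independent sets of $H$ could have landed inside $S$ --- is exactly the paper's strategy, your double-counting step is equivalent to the paper's first-moment/Markov computation ($N p^{g}\geq \Ex[N']\geq \Pr[N'\geq 1]$, giving $N\gtrsim p^{-g}$), and your two choices of $p$ match the paper's. The first genuine gap is that you never control the maximum degree. Every concentration inequality you could use for $|E(H[S])|$ (McDiarmid, the variance form of Azuma, or Kim--Vu) needs a \emph{worst-case} bound on how much toggling one vertex changes the count, i.e.\ a bound on $\Delta(H)$; ``at most $t$ on average'' is not a usable Lipschitz constant, and $\sum_v \deg(v)^2$ can be enormous when degrees are skewed. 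The paper deals with this by a separate reduction: if some vertex has degree above $tn^{\epsilon/8}$, first delete all vertices of degree above $tn^{\epsilon/8}/2$, check that the average degree does not increase and that only $O(n^{1-\epsilon/8})$ vertices are lost, and only then run the sampling argument. Without this step the concentration claims you assert are false in general.

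The second gap is that you have the relative difficulty of the two cases inverted. With $p=t^{-1/(k+1)}$ (the linear case) and Lipschitz constants as large as $tn^{\epsilon/8}$, a bounded-differences or variance--Azuma bound gives a tail exponent of order $ntp^{2k+1}n^{-\Theta(\epsilon)}=nt^{-k/(k+1)}n^{-\Theta(\epsilon)}$, which is not even $\omega(\ln n)$ when $t$ approaches $n^k$ and $k\geq 2$; so a McDiarmid-style argument cannot prove the linear case. This is precisely why the paper invokes the Kim--Vu polynomial concentration inequality there, using linearity to show that the conditional expectations $M_A$ are at most $1$ for all $|A|\geq 2$. Conversely, the non-linear case is the one where the simple variance inequality of Alon--Kim--Spencer suffices --- but only because $p=t^{(\epsilon-1)/(k(2k+1))}$ is deliberately taken close enough to $1$ to make that inequality close; the weaker exponent in the non-linear case comes from this forced choice of $p$, not from having to enumerate pairs of edges sharing $2,\dots,k$ vertices. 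To complete your proof you would need to (i) add the high-degree-vertex reduction and (ii) replace the bounded-differences argument in the linear case by a polynomial concentration bound that exploits linearity.
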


\begin{remark}
In \cite{ktfreeaeks}, Ajtai, Erd\H{o}s, Koml\'os, and Szemer\'edi asked if every $K_r$-free graph
has independence number at least $\Omega( \frac{n}{t} \ln t)$.
They gave a lower bound of $\Omega(\frac{n}{t} \ln \ln t)$, which 
Shearer \cite{ktfreeshearer} later improved to $\Omega(\frac{n}{t}\frac{\ln t}{\ln \ln t})$ 
for sufficiently large $t$.
Theorem \ref{thmheredprop} implies that if there exists $c_r$ so that every $K_r$-free graph $G$
satisfies $\alpha(G) \geq c_r \frac{n}{t}\ln t$, then 
\[
i(G) \geq
\binom{n}{\Omega(\frac{n}{t}\ln t)}
= e^{\Omega(\frac{n}{t}\ln^2t)}.
\]
\end{remark}

\section{Lower Bounds}\label{secproof}

Theorems \ref{thmtrifreegrphs} and \ref{thmlowerbdlinhypgr}
follow from the linear case of Theorem \ref{thmheredprop}. We will prove Theorem \ref{thmheredprop}
for linear hypergraphs and afterward describe the changes needed for
non-linear hypergraphs.

We first state a version of the Chernoff bound and two
claims, which contain the main differences between the linear
and non-linear cases. The proofs of the claims will follow the
proof of the theorem.

\begin{thmchernoff}[Chernoff bound \cite{molloyreed}]
Suppose $X$ is the sum of $n$ independent variables, each equal to $1$ with 
probability $p$ and 0 otherwise. Then for any $0 \leq t \leq np$,
\[
\Pr(|X-np| > t) < 2e^{-t^2/3np}.
\]
\end{thmchernoff}
\begin{setup}
Fix $k \geq 1$ and $\epsilon \in (0, \frac{4}{k+1})$.
Let $H$ be a $(k+1)$-uniform hypergraph with $n$ vertices,
average degree at most $t < n^k$, 
and maximum degree at most $nt^{\epsilon/8}$. Select each vertex of $H$ independently with 
probability $p$. 
Let $m'$ denote the sum of vertex degrees in the subgraph induced by
the selected vertices.
\end{setup}
\noindent
The next two claims come under the assumption of the setup.
\begin{claim} \label{clmlineardegsum}
If $H$ is linear and $p=t^{\frac{-1}{k+1}}$,
Then for all $n > n_0(\epsilon)$,
\[
\Pr\left[m' > ntp^{k+1} + \frac{ntp^{k+1}}{n^{\epsilon/20}}\right] < n^{-2}.
\]
\end{claim}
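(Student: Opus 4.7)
My plan is to compute $\mathbb{E}[m']$ exactly and then show that $m'$ is strongly concentrated around its mean, using a polynomial concentration inequality together with the linear structure of $H$.

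First I would observe that $m' = (k+1)|E(H[V'])| = (k+1)\sum_{e\in E(H)}\prod_{v\in e}X_v$, where $X_v = \mathbf{1}[v\in V']$ are independent Bernoulli$(p)$ variables. Linearity of expectation together with the identity $\sum_v d(v) = (k+1)|E(H)| = nt$ gives
\[
\mathbb{E}[m'] = (k+1)|E(H)|p^{k+1} = ntp^{k+1},
\]
the exact center of the claimed interval. It remains to bound the upper tail at $\mathbb{E}[m'](1+n^{-\epsilon/20})$ by $n^{-2}$.

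For this I would apply a polynomial concentration inequality (of Kim--Vu type) to $Y := m'/(k+1)$, a polynomial of degree $k+1$ in the independent Bernoullis $X_v$. Such inequalities control deviations in terms of the partial expectations $E_j := \max_{|A|=j}\mathbb{E}[\partial^A Y]$, and the linearity of $H$ is precisely what makes the higher-order partials small: whenever $|A|=j\ge 2$, at most one edge of $H$ contains $A$, so $E_j\le p^{k+1-j}\le 1$. The only nontrivial partials are the zeroth and first order,
\[
E_0 = \mathbb{E}[Y] = \frac{n}{k+1}, \qquad E_1 \le \Delta(H)p^k,
\]
with the max-degree hypothesis $\Delta(H)\le nt^{\epsilon/8}$ entering precisely at the first-order level.

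Choosing $\lambda = (k+3)\log n$ in Kim--Vu gives a failure probability at most $n^{-2}$ and a deviation at most a $k$-dependent constant times $\sqrt{E_0E_1}(\log n)^{k+1}$. The main obstacle is the arithmetic verification that this deviation fits inside $\mathbb{E}[m']/((k+1)n^{\epsilon/20})$. After substituting $p=t^{-1/(k+1)}$ and the hypothesis on $\Delta$, this becomes a polynomial inequality in $n$ and $t$ whose $n$-exponents are bounded so that the hypothesis $\epsilon<4/(k+1)$ (together with $t<n^k$) leaves enough room to absorb the $(\log n)^{k+1}$ factor for all $n\ge n_0(\epsilon)$; verifying this cleanly across the full range of $t$ is the bookkeeping-heavy step where I expect the proof to spend the most effort.
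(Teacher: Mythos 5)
Your outline follows the paper's proof of this claim exactly: apply the Kim--Vu polynomial concentration inequality to the edge-counting polynomial, use linearity to force $\Ex[Z_A]\le p^{k+1-|A|}\le 1$ whenever $|A|\ge 2$ (since at most one edge contains $A$), bound the first-order partial expectations via the maximum degree, and take $\lambda=(k+3)\ln n$ so that the failure probability $b\,n^{k}e^{-(k+3)\ln n}$ is below $n^{-2}$. All of that is right and is what the paper does.

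The problem is precisely the step you set aside as ``bookkeeping.'' That verification is the only place the quantitative hypotheses enter, and with the maximum-degree bound as you quote it, $\Delta(H)\le nt^{\epsilon/8}$, it does not close: then $E_1\le nt^{\epsilon/8}p^k$ and $\sqrt{E_0E_1}$ is of order $n\,t^{\epsilon/16-k/(2(k+1))}$, which for bounded $t$ is $\Theta(n)=\Theta(\Ex[m'])$, so no relative deviation $n^{-\epsilon/20}$ can be extracted. Indeed the claim is false under that hypothesis: with $t=O(1)$, $p=\Theta(1)$, and a single vertex of degree $\Theta(n)$ (permitted by a linear hypergraph), that vertex survives with constant probability and then $m'$ exceeds $ntp^{k+1}$ by a constant factor. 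The bound the paper's argument actually uses --- and the one supplied by Case 1 of Theorem \ref{thmheredprop} --- is $\Delta(H)\le tn^{\epsilon/8}$; the Setup's ``$nt^{\epsilon/8}$'' is evidently a transposition. With $M_1\le tn^{\epsilon/8}p^k$ one gets
\[
\sqrt{M_0M_1}\;\le\;\sqrt{ntp^{k+1}\cdot tn^{\epsilon/8}p^{k}}\;=\;\frac{ntp^{k+1}}{n^{1/2-\epsilon/16}\,p^{1/2}}\;\le\;\frac{ntp^{k+1}}{n^{\epsilon/16}},
\]
where the last inequality is exactly where $t<n^{k}$ and $\epsilon<\frac{4}{k+1}$ are consumed (one has $n^{1/2-\epsilon/16}p^{1/2}\ge n^{1/(2(k+1))-\epsilon/16}\ge n^{\epsilon/16}$), and the remaining gap $n^{\epsilon/16-\epsilon/20}=n^{\epsilon/80}$ absorbs the factor $a((k+3)\ln n)^{k+1}$ for $n\ge n_0(\epsilon)$. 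Redo your final estimate with the corrected degree bound; the rest of your outline is sound.
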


\begin{claim} \label{clmgenrldegsum}
If $p=t^{\frac{\epsilon-1}{k(2k+1)}}$, then for all $n > n_0(\epsilon)$,
\[
\Pr\left[m' > ntp^{k+1} + \frac{ntp^{k+1}}{n^{\epsilon/20}}\right] < n^{-2}.
\]
\end{claim}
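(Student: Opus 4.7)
Write $X_v$ for the indicator that $v$ is selected and set $Y_e = \prod_{v \in e} X_v$ for each edge $e \in E(H)$, so that $m' = (k+1)\sum_{e \in E(H)} Y_e$. Since $(k+1)|E(H)| = \sum_v d_H(v) = nt$, the mean is
\[
\Ex[m'] = (k+1)|E(H)|\,p^{k+1} = ntp^{k+1},
\]
which matches the centering in the claim. The plan is then to control the upper tail by a second-moment (and, if necessary, higher-moment) estimate.

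For the variance I split $\sum_{e \neq e'} \mathrm{Cov}(Y_e, Y_{e'})$ by $j = |e \cap e'|$. Disjoint pairs contribute zero, and for $j \geq 1$ we have $\mathrm{Cov}(Y_e, Y_{e'}) \leq p^{2(k+1)-j}$. The hypothesis $\Delta(H) \leq nt^{\epsilon/8}$ forces the codegree of every nonempty vertex subset to be at most $\Delta$, so for each $j \in \{1,\ldots,k\}$ the number of ordered edge pairs with $|e\cap e'|=j$ is at most $|E(H)|\binom{k+1}{j}\Delta = O(nt\Delta)$. Since $p<1$ the geometric sum over $j$ is dominated by its largest term, yielding
\[
\Var(m') = O\bigl(ntp^{k+1}(1 + \Delta p)\bigr).
\]

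The main obstacle is that a direct Chebyshev application with $\lambda = ntp^{k+1}/n^{\epsilon/20}$ only gives a tail of order $(1+\Delta p)\,n^{\epsilon/10}/(ntp^{k+1})$, which is polynomially small in $n$ but, for $t$ close to the extreme value $n^k$, does not by itself beat the target $n^{-2}$. I would close this gap by passing to a higher central moment: bound $\Ex[(m' - \Ex[m'])^{2r}]$ by expanding over $2r$-tuples of edges, grouping terms by intersection pattern, tracking the power of $p$ (determined by the number of distinct vertices in the pattern) and the combinatorial multiplicity (controlled by iterated codegree bounds using $\Delta$), and then applying Markov's inequality with $r = r(\epsilon)$ large but constant. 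The specific exponent $(\epsilon-1)/(k(2k+1))$ in the definition of $p$ is calibrated precisely so that, after substituting $t < n^k$ and $\Delta \leq nt^{\epsilon/8}$, the powers of $n$ and $t$ in the resulting bound assemble to give $n^{-2}$ with an $\epsilon$-dependent safety margin; verifying this exponent arithmetic uniformly over the admissible range of $t$ will be the main bookkeeping task, and it is this balance between the $\Delta$-penalty from non-linearity and the gain from making $p$ small that dictates the unusual exponent $(\epsilon-1)/(k(2k+1))$ as opposed to the cleaner $-1/(k+1)$ appearing in Claim~\ref{clmlineardegsum}.
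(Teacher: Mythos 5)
Your setup is sound: the identity $m'=(k+1)\sum_e Y_e$, the mean $ntp^{k+1}$, the covariance decomposition by intersection size, and the resulting bound $\Var(m')=O(ntp^{k+1}(1+\Delta p))$ are all correct, and you are also right that Chebyshev with $\lambda = ntp^{k+1}/n^{\epsilon/20}$ falls short of $n^{-2}$ when $t$ is near $n^{k}$. But the proof then stops exactly where the work begins. The entire burden of the claim is carried by the higher-moment estimate you describe only in outline: you never bound $\Ex[(m'-\Ex[m'])^{2r}]$, never specify how the "intersection patterns" of $2r$-tuples of edges are classified or why the iterated codegree bounds control their multiplicity, and you explicitly defer the exponent arithmetic that would show the final bound is at most $n^{-2}$. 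Moment expansions for dependent edge-indicator sums are notoriously delicate (the dangerous contributions come from clusters of many edges through a single high-degree vertex, which is precisely the regime the maximum-degree hypothesis is there to handle), so asserting that the bookkeeping "assembles to give $n^{-2}$" is not a proof. As written, this is a plan for a proof, not a proof.

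For comparison, the paper avoids the edge-pair combinatorics entirely by switching to vertex exposure: $m'$ is a function of the $n$ independent indicators $\I[v\in V(H')]$, and toggling one vertex changes $m'$ by at most (a constant times) $\deg(v)\le tn^{\epsilon/8}$. Applying the Alon--Kim--Spencer bounded-differences inequality with $\sigma^2=\sum_v p(1-p)\deg(v)^2 \le p(1-p)\,n^{1+\epsilon/4}t^2$ and $\alpha = ntp^{k+1}/n^{\epsilon/16}$ gives
\[
\frac{\alpha^2}{4\sigma^2}\;\ge\; \frac{np^{2k+1}}{16\,n^{3\epsilon/8}}\;\ge\;\frac{n^{\epsilon}}{16\,n^{3\epsilon/8}}\;=\;\frac{n^{5\epsilon/8}}{16},
\]
using $t\le n^k$ and the choice $p=t^{(\epsilon-1)/(k(2k+1))}$, so the tail is $2e^{-n^{5\epsilon/8}/16}<n^{-2}$ in one step. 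If you want to salvage your route, I would recommend abandoning the bare moment expansion and instead invoking a concentration inequality of this Lipschitz/martingale type (or Kim--Vu, as the paper does in the linear case); otherwise you must actually carry out the $2r$-th moment computation you have only promised.
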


\begin{proof}[Proof of Theorem \ref{thmheredprop}]
Fix $k \geq 1$ and $\epsilon \in (0, \frac{4}{k+1})$.
Let $H \in \mathcal{P}$ be a $(k+1)$-uniform, linear hypergraph with $n$ vertices and average degree 
at most $t < n^k$. 
We assume $n \geq n_0$, where $n_0$ is chosen implicity so that several inequalities
throughout the proof are satisfied.
We consider two cases. 
In Case 1, we require that the maximum degree of $H$ is at most 
$tn^{\epsilon/8}$, while Case 2 requires the maximum degree
of $H$ to be at least $tn^{\epsilon/8}$.

\noindent
\textbf{Case 1}:
The maximum degree of $H$ is at most $tn^{\epsilon/8}$.

Select each vertex of $H$ independently with probability
$
p = t^{-\frac{1}{k+1}}.
$
Let $H'$ denote the subgraph of $H$ induced by the selected vertices.
Let $n'$ denote the the number of vertices in $H'$.
Since $t < n^k$ and $\epsilon < \frac{4}{k+1}$, 
\[
np = nt^{-\frac{1}{k+1}} > n^{1 - k/(k+1)} = n^{\frac{1}{k+1}} > n^{\epsilon/4}.
\]
By the Chernoff bound,
\begin{equation}\label{change1}
\Pr[|n'-np|> \frac{np}{n^{\epsilon/20}}] \leq 2e^{-np/3n^{\epsilon/20}} < n^{-2}.
\end{equation}
Let $m'$ denote the sum of vertex degrees in $H'$.
By linearity of expectation, 
$$\Ex[m']=ntp^{k+1}.$$
Set $\lambda = n^{-\epsilon/20}$. 
By Claim \ref{clmlineardegsum},
\begin{equation}\label{change2}
\Pr[m' > (1+\lambda)ntp^{k+1}] < n^{-2}.
\end{equation}

Therefore, by the union bound, with probability at least $1-2n^{-2}>1-1/n$,
$H'$ satisfies both 
$$m' \leq (1+\lambda)ntp^{k+1}$$
and 
$$n' \geq (1-\lambda)np.$$
Let $t' = (1+3\lambda)tp^k$.
Then with probability at least $1-1/n$,
$H'$ has average degree at most
\[
m'/n'
\leq \frac{(1+\lambda)ntp^{k+1}}{(1-\lambda)np}
\leq (1+3\lambda)tp^k
= t'.
\]
Since $\mathcal{P}$ is hereditary, $H' \in \mathcal{P}$.
Thus, with probability at least $1-1/n$, $H'$ has an independent set of size at least 
\begin{align*}
\frac{n'}{t'^{1/k}}f(t')
\geq \frac{(1-\lambda)np}{((1+3\lambda)tp^k)^{1/k}} f( (1+3\lambda)tp^k )
&= \frac{(1-\lambda)n}{(1+3\lambda)^{1/k}t^{1/k}} f( (1+3\lambda)tp^k ) \\
&\geq \frac{(1-\lambda)n}{(1+3\lambda)t^{1/k}} f( (1+3\lambda)tp^k ) \\
&> (1-6\lambda)\frac{n}{t^{1/k}} f( (1+3\lambda)tp^k ) \\
&\geq (1-6\lambda)\frac{n}{t^{1/k}} f(tp^k ),
\end{align*}
where we used that $f$ is non-decreasing in the last inequality.

Let $g = (1-6\lambda)\frac{n}{t^{1/k}} f(tp^k )$.
Suppose $I$ is an independent set in $H$ with at least $g$ 
vertices. Then 
$$\Pr[I\subset V(H')] = p^{|I|} \leq p^g.$$

Let $N$ denote the number of independent sets in $H$ with at least 
$g$ vertices, and let the random variable $N'$ denote the number of 
independent sets in $H'$ with at least $g$ vertices. By Markov's 
inequality,
\[
1-1/n < \Pr[N'\geq 1] \leq \Ex[N'] \leq Np^g \\
                                     = N e^{-g\ln p}
\]
Thus
\begin{align}\label{change3}
N
> (1-1/n) e^{-g\ln p}
&= (1-1/n)e^{(1-6\lambda)\frac{1}{k+1}\frac{n}{t^{1/k}}f(t^{\frac{1}{k+1}}) \ln t} \\
&> (1-1/n)e^{(1-n^{-\epsilon/21})\frac{1}{k+1}\frac{n}{t^{1/k}}f(t^{\frac{1}{k+1}}) \ln t}. \notag
\end{align}

\noindent
\textbf{Case 2}: The maximum degree of $H$ is more than $tn^{\epsilon/8}$.

Let 
   $$K= \{u\in V(H): \deg(u)> tn^{\epsilon/8}/2\}.$$
Let $H'$ denote the subgraph of $H$ induced by $V(H)-K$,
and let $n' = |V(H')|$.
Since
\begin{align*}
\frac{1}{n'}\sum_{v \in V(H')} \deg_{H'}(v) - \frac{1}{n}\sum_{v \in V(H')} \deg_{H}(v)
&\leq (\frac{1}{n'}-\frac{1}{n})\sum_{v \in V(H')} \deg_{H}(v)\\
&\leq (\frac{1}{n'}-\frac{1}{n})n' tn^{\epsilon/8}/2 \\
&= (n-n')\frac{tn^{\epsilon/8}}{2n} \\
&\leq \frac{1}{n}\sum_{v \in K}\deg_H(v),
\end{align*}
the average degree of $H'$ is at most 
\[
\frac{1}{n}\sum_{v \in K}\deg_H(v) + \frac{1}{n}\sum_{v \in V(H')}\deg_H(v)
= \frac{1}{n}\sum_{v \in V(H)}\deg_H(v)
\leq t.
\]
Also, because
$$tn \geq \sum_{u\in V(H)} \deg_H(u) \geq \sum_{u\in K} \deg_H(u) > |K|tn^{\epsilon/8}/2,$$
$|K|<2n^{1-\epsilon/8}$, and so $n' > n(1-2n^{-\epsilon/8}) > n/2^{8/\epsilon}$. 
Thus $H'$ has maximum degree at most $tn^{\epsilon/8}/2 < tn'^{\epsilon/8}$.
Further, since $H$ has maximum degree at least $tn^{\epsilon/8}$
and at most $n^k$,
$t < n^{k-\epsilon/8}$. Hence
$t < n^{k-\epsilon/8} < n'^k.$
Thus Case 1 implies that
\[
i(H') \geq
(1-1/n')e^{(1-6\lambda)\frac{1}{k+1}\frac{n'}{t^{1/k}}f(t^{\frac{1}{k+1}}) \ln t}
> (1-2/n)e^{(1-6\lambda)(1-n^{-\epsilon/8})\frac{1}{k+1}\frac{n}{t^{1/k}}f(t^{\frac{1}{k+1}}) \ln t},
\]
where $\lambda = n'^{-\epsilon/20}$.
We conclude that 
\begin{equation}\label{change4}
i(H) \geq i(H') \geq e^{(1-n^{-\epsilon/21})\frac{1}{k+1}\frac{n}{t^{1/k}}f(t^{\frac{1}{k+1}}) \ln t}.
\end{equation}
\end{proof}

The proof of Theorem \ref{thmheredprop} when $H$ is non-linear is similar.
We set $p = t^{\frac{\epsilon-1}{k(2k+1)}}$. 
Since we still have $np > n^{\epsilon/4}$, \eqref{change1} still holds.
We then use Claim \ref{clmgenrldegsum} instead of Claim \ref{clmlineardegsum}
to prove \eqref{change2}. The proof then proceeds in the same way until we get to \eqref{change3},
where, using the different value of $p$, we instead obtain 
\[
N 
> (1-1/n)e^{(1-6\lambda)\frac{1-\epsilon}{k(2k+1)}\frac{n}{t^{1/k}}f(t^\frac{2k+\epsilon}{2k+1}) \ln t}.
\]
Finally, \eqref{change4} becomes
\[
e^{(1-n^{-\epsilon/21})\frac{1-\epsilon}{k(2k+1)}\frac{n}{t^{1/k}}f(t^\frac{2k+\epsilon}{2k+1}) \ln t}.
\]

\noindent
We now prove Theorem \ref{thmtrifreegrphs} and Theorem \ref{thmlowerbdlinhypgr}.
\begin{proof}[Proof of Theorem \ref{thmtrifreegrphs}]
Shearer \cite{trifreeshearer} showed that every triangle-free graph with $n$ vertices
and average degree $t$ has independence number at least $\frac{n}{t}(\ln(t) -1 )$.
Since being triangle-free is hereditary and graphs are $2$-uniform, linear hypergraphs, 
we may apply Theorem \ref{thmheredprop}
(with $f(t) = \ln(t)-1$) to conclude that for $\epsilon = 21/12 \in (0, 2)$,
there exists $n_0$ such that every triangle-free graph $G$ with $n \geq n_0$ vertices
and average degree at most $t$ satisfies
\[
i(G) \geq e^{(1-n^{-\epsilon/21})\frac{1}{2}\frac{n}{t}\ln t (\frac{1}{2}\ln(t)-1)}
> e^{(1-n^{-1/12})\frac{1}{2}\frac{n}{t}\ln t (\frac{1}{2}\ln(t)-1)}.
\]

Suppose $G$ is a triangle-free graph with $n < n_0$ vertices and average degree $t$.
Choose an integer $r$ so that $rn \geq n_0$. Let $G'$ be the disjoint union of $r$
copies of $G$. Then $i(G') = i(G)^r$, so by the previous paragraph,
\begin{align*}
i(G) = i(G')^{1/r} 
&\geq (e^{(1-(rn)^{-1/12})\frac{1}{2}\frac{rn}{t}\ln t (\frac{1}{2}\ln(t)-1)})^{1/r} \\
&\geq e^{(1-n^{-1/12})\frac{1}{2}\frac{n}{t}\ln t (\frac{1}{2}\ln(t)-1)}.
\end{align*}
This completes the proof of the first bound in Theorem 
\ref{thmtrifreegrphs}. For the second part, consider a triangle-free graph $G$ having average
degree $t$. $G$ contains a vertex $u$ with degree at least $t$.
The neighborhood of $u$ is an independent set, which contains $2^t$ independent
sets. Therefore, every triangle-free graph has at least 
\[
\max\{2^t, e^{(1-n^{-1/12})\frac{1}{2}\frac{n}{t}\ln t (\frac{1}{2}\ln(t)-1)}\}
\]
independent sets. 
This is minimized when $t=(\frac{1}{4}+o(1))\sqrt{n/ \ln 2 }\ln n$, so every
triangle-free graph on $n$ vertices has at least 
\[
2^{(1-o(1)).\frac{\sqrt{n}\ln n}{4\sqrt{\ln 2}}}
= e^{(1-o(1)).\frac{\sqrt{n\ln 2}\ln n}{4}}
\]
independent sets.
\end{proof}

\begin{proof}[Proof of Theorem \ref{thmlowerbdlinhypgr}]
Duke, Lefmann, and R\"{o}dl \cite{uncrowdedrodl} showed that every $(k+1)$-uniform linear
hypergraph with $n$ vertices and average degree at most $t$ has independence
number at least $c'_k \frac{n}{t^{1/k}}\ln^{1/k} t$. Since linearity is a
hereditary property, we may apply Theorem \ref{thmheredprop} (with $f(t) = c'_k \ln^{1/k}t$)
to conclude that for $\epsilon = \frac{3}{k+1} \in (0, \frac{4}{k+1})$, there exists $n_0$ such that
every $(k+1)$-uniform linear hypergraph $H$ with $n \geq n_0$ vertices satisfies
\[
i(H) \geq e^{(1-n^{-1/(7(k+1))})\frac{c'_k}{k+1} \frac{1}{(k+1)^{1/k}} \frac{n}{t^{1/k}}\ln^{1+1/k}t}
> e^{c''_k \frac{n}{t^{1/k}}\ln^{1+1/k}t}.
\]

If $H$ is a $(k+1)$-uniform linear hypergraph with $n < n_0$ vertices,
then we proceed in the same way as in the proof of Theorem \ref{thmtrifreegrphs}.
\end{proof}

It only remains to prove the claims stated at the beginning of this section.
We first prove Claim \ref{clmlineardegsum}. We will use the following 
theorem of Kim and Vu \cite{kimvu}:
\begin{thm}\label{kimvu}
Suppose $F$ is a hypergraph such that $W = V(F)$ and $|f| \leq s$ for all $f \in F$. Let
\[
	Z = \sum_{f \in F} \prod_{i \in f}z_i,
\]
where the $z_i$, $i \in W$ are independent random variables taking values in $[0, 1]$.
For $A \subset W$ with $|A| \leq s$, let
\[
	Z_A = \sum_{f \in F: f \supset A} \prod_{i \in f-A}z_i.
\]
Let $M_A = \Ex[Z_A]$ and $M_j = \max_{A: |A| \geq j}M_A$ for $j \geq 0$. Then there exists
positive constants $a = a(s)$ and $b = b(s)$ such that for any $\lambda > 0$, 
\[
	\Pr[|Z-\Ex[Z]| \geq a\lambda^s\sqrt{M_0 M_1}] \leq b|W|^{s-1}e^{-\lambda}.
\]
\end{thm}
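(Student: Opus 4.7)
The plan is to prove the Kim--Vu polynomial concentration inequality by induction on $s$, the maximum edge size in $F$. For the base case $s = 1$, the polynomial $Z = \sum_{i} c_i z_i$ is a sum of independent $[0,1]$-valued random variables, and $M_0 = \Ex[Z]$ while $M_1 = \max_i c_i$. A standard Hoeffding/Chernoff argument then yields $\Pr[|Z - \Ex[Z]| \geq a \lambda \sqrt{M_0 M_1}] \leq e^{-\lambda}$ for a suitable absolute constant $a$, which matches the claimed bound with $b(1) = 1$.

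For the inductive step, I would expose the variables $z_1, z_2, \ldots, z_{|W|}$ one at a time and form the Doob martingale $X_i = \Ex[Z \mid z_1, \ldots, z_i]$, so $X_0 = \Ex[Z]$ and $X_{|W|} = Z$. The martingale increment $X_i - X_{i-1}$ can be written as an expectation (over the unexposed variables) of a weighted partial derivative of $Z$ with respect to $z_i$; this partial derivative is exactly a polynomial of the form $Z_{\{i\}}$, which has degree at most $s-1$ in the remaining variables. Apply the inductive hypothesis to each such partial derivative (after conditioning on the exposed prefix): with probability at least $1 - b(s-1)|W|^{s-2} e^{-\lambda}$, the derivative lies within $a(s-1)\lambda^{s-1}\sqrt{M_1 M_2}$ of its mean, which is at most $M_1$. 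Union bounding over the $|W|$ coordinates and over the two levels contributes the $|W|^{s-1}$ factor in the final failure probability.

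On the good event that all partial-derivative deviations stay small, the martingale differences $|X_i - X_{i-1}|$ are uniformly bounded in terms of $\lambda, M_0, M_1$. One then applies a martingale concentration inequality (Freedman's inequality, or Azuma--Hoeffding combined with a variance bound driven by $M_0$) to the sequence $(X_i)$ to conclude that $|Z - \Ex[Z]| \leq a(s)\lambda^s \sqrt{M_0 M_1}$ with probability at least $1 - e^{-\lambda}$. Combining this with the inductive failure probability via a union bound yields the theorem, provided $a(s)$ and $b(s)$ are defined recursively from $a(s-1), b(s-1)$ and the constants coming from the martingale inequality.

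The main obstacle is two-fold. First, the constants $a(s)$ and $b(s)$ must be tracked carefully through the induction so that they remain finite functions of $s$ only; a naive recursion can blow up, and it is essential that the inductive step multiply $b(s-1)$ by at most a factor depending on $s$ alone. Second, and more delicate, is the nested conditioning: the polynomials $Z_{\{i\}}$ become random once we have revealed $z_1, \ldots, z_{i-1}$, so the inductive hypothesis must be applied uniformly over all realizations of the exposed prefix, and the geometric-mean deviation scales $\sqrt{M_0 M_1}, \sqrt{M_1 M_2}, \ldots$ must interlock so that a single parameter $\lambda$ governs every level of the cascade. Handling these issues is what makes a fully rigorous derivation substantially longer than the outline above.
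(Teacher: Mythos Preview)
The paper does not prove this theorem at all; it quotes it as a black box from Kim and Vu \cite{kimvu} and then applies it to establish Claim~\ref{clmlineardegsum}. There is therefore no ``paper's own proof'' to compare your proposal against.

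That said, your outline is essentially the original Kim--Vu argument: induction on the degree $s$, with a Doob martingale in the inductive step and the observation that partial derivatives $Z_{\{i\}}$ are lower-degree polynomials to which the inductive hypothesis applies. The two obstacles you flag --- tracking the constants $a(s), b(s)$ through the recursion, and applying the inductive bound uniformly over all realizations of the exposed prefix --- are exactly the points where the full proof requires care, and your identification of them is accurate. One detail worth sharpening: in the inductive step the relevant martingale inequality is closer to a Bernstein/Freedman-type bound than to plain Azuma, because one needs to exploit the variance proxy coming from $M_0$ (the sum of expected increments squared) rather than just a uniform bound on each increment; plain Azuma with the increment bound alone would lose the $\sqrt{M_0 M_1}$ scale and give only $\sqrt{|W|}\,M_1$. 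With that refinement, your sketch would develop into a correct proof, but for the purposes of this paper a citation suffices.
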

\begin{proof}[Proof of Claim \ref{clmlineardegsum}]
Apply Theorem \ref{kimvu} with $F = H$ and $\Pr[z_i = 1] = p = t^{-\frac{1}{k+1}}$.
Note first that 
\[
M_{\emptyset} \leq ntp^{k+1} = nt^{1-1} = n.
\]
Since the maximum degree of $H$ is at most $tn^{\epsilon/8}$,
\[
M_{\{u\}} \leq t n^{\epsilon/8} p^k = n^{\epsilon/8}t^{\frac{1}{k+1}}
\]
for any $u \in V(G)$.
By linearity, for any $A \subset V(G)$ with $|A| \geq 2$,
\[
\Ex[Z_{A}] \leq p^{k+1-|A|} \leq 1.
\]
Since $t \leq n^k$ and $\epsilon < \frac{4}{k+1}$,  
$n \geq n^{\epsilon/8}t^{\frac{1}{k+1}}$.
Further, $n^{\epsilon/8}t^{\frac{1}{k+1}} \geq 1$. Therefore $M_0 \leq n$
and $M_1 \leq n^{\epsilon/8} t^{1/(k+1)}$. Theorem \ref{kimvu} therefore 
implies that there exist constants $a = a(k)$ and $b = b(k)$ such that
\[
\Pr[|m' - \Ex[m']| > a ((k+3) \ln n)^{k+1}\sqrt{ntp^{k+1} tn^{\epsilon/8}p^k}]
\leq b n^{k} e^{-(k+3)\ln n}.
\]
Since $t \leq n^k$ and $\epsilon < \frac{4}{k+1}$,
\[
\sqrt{ntp^{k+1} tn^{\epsilon/8}p^k} 
= \frac{ntp^{k+1}}{n^{1/2 - \epsilon/16}p^{1/2}} 
\leq \frac{ntp^{k+1}}{n^{\epsilon/16}}.
\]
Thus, since $\Ex[m'] \leq ntp^{k+1}$,
\begin{align*}
\Pr[m' > ntp^{k+1} + \frac{ntp^{k+1}}{n^{\epsilon/20}}]
&< \Pr[m' > \Ex[m'] + a((k+3)\ln n)^{k+1} \frac{ntp^{k+1}}{n^{\epsilon/16}}] \\
&\leq b n^{k} e^{-(k+3)\ln n} \\
&< n^{-2}.
\end{align*}
\end{proof}

\noindent
To prove Claim \ref{clmgenrldegsum}, we will
apply the following theorem of Alon, Kim, and Spencer \cite{AKS97}:
\begin{thm}\label{alonkimspencer}
Let $X_1,\dots,X_n$ be independent random variables with
\[
\Pr[X_i = 0] = 1-p_i \text{ and } \Pr[X_i = 1] = p_i.
\]
For $Y = Y(X_1, \dots, X_n)$, suppose that
\[
|Y(X_1, \dots, X_{i-1},1,X_{i+1},\dots,X_n)-Y(X_1, \dots, X_{i-1},0,X_{i+1},\dots,X_n)| \leq c_i
\]
for all $X_1, \dots, X_{i-1}, X_{i+1}, \dots, X_n$, $i = 1, \dots, n$. Then for
\[
\sigma^2 = \sum_{i=1}^n p_i(1-p_i)c_i^2
\]
and a positive constant $\alpha$ with $\alpha \max_i c_i < 2 \sigma^2$,
\[
\Pr[|Y - \Ex[Y]| > \alpha) \leq 2e^{-\frac{\alpha^2}{4\sigma^2}}.
\]
\end{thm}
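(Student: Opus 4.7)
The plan is to obtain this as a Bernstein-type bound by applying the Chernoff trick to the Doob martingale associated with $Y$. Let $\mathcal{F}_i = \sigma(X_1,\dots,X_i)$ and $Y_i = \Ex[Y\mid \mathcal{F}_i]$, so that $Y_0 = \Ex[Y]$, $Y_n = Y$, and the martingale increments $D_i = Y_i - Y_{i-1}$ are centered conditional on $\mathcal{F}_{i-1}$. The two inputs I need to feed into a moment-generating-function estimate are a pointwise bound on $D_i$ and a sharp conditional variance bound that captures the extra $p_i(1-p_i)$ factor relative to McDiarmid's inequality.

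First I would extract both estimates for the increments. Writing $a = \Ex[Y\mid \mathcal{F}_{i-1}, X_i=0]$ and $b = \Ex[Y\mid \mathcal{F}_{i-1}, X_i=1]$, the bounded-difference hypothesis survives under the conditional expectation and yields $|b-a| \leq c_i$. Since $Y_i$ takes the value $a$ with probability $1-p_i$ and $b$ with probability $p_i$ given $\mathcal{F}_{i-1}$, and $Y_{i-1} = (1-p_i)a + p_i b$, a direct calculation gives
\[
D_i = \begin{cases} -p_i(b-a) & \text{w.p.\ } 1-p_i, \\ (1-p_i)(b-a) & \text{w.p.\ } p_i, \end{cases}
\]
so in particular $|D_i| \leq c_i$ and $\Ex[D_i^2 \mid \mathcal{F}_{i-1}] = p_i(1-p_i)(b-a)^2 \leq p_i(1-p_i)c_i^2$.

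Next I would plug these into the standard Bernstein MGF bound for bounded mean-zero variables: for any $D$ supported in $[-c,c]$ with $\Ex D = 0$ and $\Ex D^2 \leq v$, one has $\Ex[e^{\lambda D}] \leq \exp\bigl(\lambda^2 v / (2(1 - \lambda c /3))\bigr)$ whenever $\lambda c < 3$. Applying this conditionally on $\mathcal{F}_{i-1}$ for each $i$ and iterating via the tower property gives
\[
\Ex\bigl[e^{\lambda(Y-\Ex Y)}\bigr] \leq \exp\!\Bigl(\frac{\lambda^2 \sigma^2}{2(1 - \lambda c_{\max}/3)}\Bigr).
\]
Markov's inequality then bounds $\Pr[Y - \Ex Y > \alpha]$ by this quantity times $e^{-\lambda\alpha}$. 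I would choose $\lambda = \alpha/(2\sigma^2)$; the hypothesis $\alpha c_{\max} < 2\sigma^2$ converts to $\lambda c_{\max} < 1$, which keeps the denominator $1 - \lambda c_{\max}/3$ above $2/3$ and, after a short calculation, collapses the exponent to at most $-\alpha^2/(4\sigma^2)$. A symmetric argument applied to $-Y$ together with a union bound supplies the factor of $2$.

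The routine parts are the martingale construction and the tower-property iteration; the one delicate point, and the main obstacle I expect, is the numerical calibration of constants. One must verify that the threshold $\alpha c_{\max} < 2\sigma^2$ is just strong enough to absorb the higher-order terms of the Bernstein MGF bound into the $-\lambda\alpha$ term and land on exactly the advertised exponent $-\alpha^2/(4\sigma^2)$; this is the reason the theorem is stated with precisely that numerical condition rather than a cleaner one.
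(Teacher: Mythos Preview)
The paper does not prove this statement at all: Theorem~\ref{alonkimspencer} is quoted from Alon, Kim, and Spencer \cite{AKS97} and is used as a black box in the proof of Claim~\ref{clmgenrldegsum}. So there is no ``paper's own proof'' to compare your proposal against.

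That said, your sketch is essentially the standard martingale proof of this inequality and is sound. The Doob martingale with increments $D_i$ is exactly the right object; your computation that $D_i$ takes the two values $-p_i(b-a)$ and $(1-p_i)(b-a)$ is what produces the crucial conditional-variance bound $p_i(1-p_i)c_i^2$ (and hence the improvement over the ordinary bounded-differences inequality). The Bernstein-type MGF bound, iterated via the tower property with $c_i$ replaced by $c_{\max}$ in the denominator, combined with the choice $\lambda=\alpha/(2\sigma^2)$ and the hypothesis $\alpha c_{\max}<2\sigma^2$ (so $\lambda c_{\max}<1$), does indeed land on an exponent at most $-\alpha^2/(4\sigma^2)$; your arithmetic check here is correct. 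The only point worth flagging as ``to be written out carefully'' rather than ``routine'' is the numerical calibration you identify at the end, but your outline already handles it.
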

\begin{proof}[Proof of Claim \ref{clmgenrldegsum}]
Recall that $p = t^{\frac{\epsilon-1}{k(2k+1)}}$.
The random variable $m'$ is determined by the $n$ independent, indicator random
variables $\I[v \in V(H')]$. Each of these affects $m'$ by at most 
$\deg(v) \leq tn^{\epsilon/8}$.
Set $\alpha = \frac{ntp^{k+1}}{n^{\epsilon/16}}$ and $\sigma^2 = n^{1+\epsilon/4}p(1-p) t^2$.
Note that $\alpha tn^{\epsilon/8} \leq 2 \sigma^2$. 
Also, because $t \leq n^k$,
\[
\frac{\alpha^2}{4\sigma^2}
= \frac{np^{2k+1}}{16n^{\epsilon/4+\epsilon/8}(1-p)}
\geq \frac{np^{2k+1}}{16n^{\epsilon/4+\epsilon/8}}
= \frac{nt^{\frac{\epsilon-1}{k}}}{16n^{3\epsilon/8}}
\geq \frac{n^\epsilon}{16n^{3\epsilon/8}}
= n^{5\epsilon/8}/16.
\]
Since $\Ex[m'] \leq ntp^{k+1}$, Theorem \ref{alonkimspencer} implies
\[
\Pr[m' > ntp^{k+1} + \frac{ntp^{k+1}}{n^{\epsilon/20}}]
< \Pr[m' > \Ex[m'] + \frac{ntp^{k+1}}{n^{\epsilon/16}}] \leq 2e^{-n^{5\epsilon/8}/16}
< n^{-2}.
\]
\end{proof}

\section{Upper Bound for Triangle-free Graphs}

In this section we prove Theorem \ref{thmtrifreegrphsupper}.
We use the results of Bohman-Keevash \cite{bohmankeevash13} 
and Fiz Pontiveros-Griffiths-Morris \cite{trifreelong13}
on the triangle-free graph process:
Let $G$ be the maximal graph in which the triangle-free
process terminates.
\begin{thm}[Bohman-Keevash, Fiz Pontiveros-Griffiths-Morris] \label{thmbkmax}
  With high probability, every vertex of $G$ has degree 
  $d\leq (1+o(1)) \sqrt{\frac{1}{2}n\ln n}$, and independence number
  $\alpha \leq (1+o(1)) \sqrt{2n\ln n}$.
\end{thm}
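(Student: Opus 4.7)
The plan is to analyze the triangle-free process via the differential equation method, following Bohman--Keevash and Fiz Pontiveros--Griffiths--Morris. First I pass to a continuous-time formulation: assign each of the $\binom{n}{2}$ pairs an independent rate-$1$ exponential clock, and when a clock rings, add the corresponding edge to the current graph $G(t)$ provided the pair is still \emph{open}, i.e.\ the two endpoints share no common neighbor. Let $Q(t)$ denote the global number of open pairs and $d(v,t)$ the degree of vertex $v$ at time $t$.

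The technical heart is to track a hierarchy of random variables and show each concentrates around a deterministic trajectory satisfying an ODE derived from its expected infinitesimal change. The principal quantities are the global count $Q(t)$, which on the appropriate time scale follows the Gaussian-type decay $Q(t)/\binom{n}{2} \sim e^{-4t^2}$; the vertex degrees $d(v,t)$, each of which concentrates around a common deterministic trajectory; and local statistics such as the number of open pairs straddling each non-edge's neighborhoods. Concentration is proved by induction over time, applying Freedman's martingale inequality to the error between each tracked variable and its predicted trajectory, exploiting that each edge addition has bounded one-step influence. One then shows that the process terminates at a time $t^\ast$ with $Q(t^\ast)=0$, and a direct computation from the degree trajectory at $t^\ast$ yields that every vertex has degree at most $(1+o(1))\sqrt{\tfrac{1}{2}n\ln n}$, giving the first claim.

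For the independence number, I would fix a candidate set $S \subseteq V$ with $|S| = (1+\varepsilon)\sqrt{2n\ln n}$ and track the internal count $Q_S(t) := |\{\{u,v\} \subseteq S : uv \text{ is open in } G(t)\}|$. A variant of the DE analysis shows that while $S$ remains independent, $Q_S(t)$ concentrates around $\binom{|S|}{2}e^{-4t^2}$; integrating the rate at which internal open pairs would be added as edges then shows that $S$ survives independent until $t^\ast$ only with probability $\exp(-\Omega(|S|^2 t^\ast/n))$, which by the choice of $|S|$ comfortably beats the union bound over all $\binom{n}{|S|}$ candidate sets.

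The main obstacle is the concentration step for the local and higher-order statistics: a single edge addition can close many open pairs at distance two, so Azuma-type Lipschitz bounds are too crude near termination. Overcoming this requires enlarging the family of tracked ``error variables,'' introducing stopping times that kill trajectories the first time any error exceeds its predicted envelope, and running a delicate simultaneous induction showing that with high probability no stopping time is triggered before $t^\ast$.
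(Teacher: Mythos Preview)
The paper does not prove Theorem~\ref{thmbkmax} at all: it is quoted verbatim as a result of Bohman--Keevash \cite{bohmankeevash13} and Fiz Pontiveros--Griffiths--Morris \cite{trifreelong13} and then used as a black box in the construction for Theorem~\ref{thmtrifreegrphsupper}. There is therefore no ``paper's own proof'' to compare your proposal against.

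As a sketch of what those references actually do, your outline is broadly faithful---differential equation method, tracking of open pairs and degrees with martingale concentration, and a union bound over candidate independent sets---though both cited papers work in discrete time (edge $i$ added at step $i$, with the scaled time $t = i/n^{3/2}$) rather than the continuous exponential-clock formulation you describe, and the full hierarchy of tracked variables and the self-correcting error envelopes are substantially more intricate than your paragraph suggests. But for the purposes of this paper, no such argument is expected of you; the theorem is simply imported.
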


Let $r=\frac{1}{2}\ln n$. Construct the graph $G'$ from $G$
as follows:
\paragraph{Construction of $G'$:}
We take the strong graph product of $G$ and $\bar{K_r}$, the empty
graph on $r$ vertices.
Replace each vertex $v$ of $G$ by a copy $C_v$ of $\bar{K_r}$.
Introduce a complete bipartite graph between all the vertices of 
$C_v$ and $C_u$ if and only if $\{u,v\}\in E(G)$. We obtain the graph
$G'$.
Notice that 
$|V(G')|= N = \frac{1}{2}n\ln n$.

Define the function $f:V(G')\to V(G)$, such that given any 
$i\in C_u\subset V(G')$, $f(i) = u$. For a set $S \subset V(G')$,
define $f(S) = \bigcup_{i\in S} \{f(i)\}$.

\begin{claim}\label{lemmaconstr}
  For every $S \subset V(G')$, $S$ is independent 
  only if $f(S)$ is independent in $G$. Further $|S| \leq r|f(S)|$.
\end{claim}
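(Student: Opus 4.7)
The plan is to verify both assertions directly from the construction of $G'$ as a blow-up of $G$ in which every vertex $v$ is replaced by an independent set $C_v$ of size $r$, with a complete bipartite graph inserted between $C_u$ and $C_v$ exactly when $\{u,v\} \in E(G)$. The two halves of the claim are essentially independent of one another, and neither requires any nontrivial machinery beyond unpacking this definition.

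For the first assertion (``$S$ independent $\Rightarrow f(S)$ independent''), I would argue the contrapositive. Suppose $f(S)$ fails to be independent in $G$, so there exist $u, v \in f(S)$ with $\{u,v\} \in E(G)$. By the definition of $f$, this means $S$ contains some $i \in C_u$ and some $j \in C_v$. But the construction places a complete bipartite graph between $C_u$ and $C_v$ precisely because $\{u,v\} \in E(G)$, so $\{i,j\} \in E(G')$, and $S$ cannot be independent.

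For the size bound, I would simply note that $f$ has fibers $f^{-1}(u) = C_u$ of size exactly $r$, so $|S \cap C_u| \leq r$ for each $u \in f(S)$. Summing over the image gives
\[
|S| \;=\; \sum_{u \in f(S)} |S \cap C_u| \;\leq\; r \cdot |f(S)|,
\]
which is the desired inequality; no independence hypothesis is used here.

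I do not anticipate any genuine obstacle. The only subtlety worth flagging is reading the construction correctly: each $C_v$ is itself an independent set in $G'$ (so an independent $S$ may well intersect a single $C_v$ in up to $r$ vertices, which is exactly why the bound in the second part carries the factor of $r$), and edges between different $C_u, C_v$ appear if and only if $uv \in E(G)$. Both facts are explicit in the construction, so the proof reduces to the two short paragraphs above.
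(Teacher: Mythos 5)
Your proof is correct and takes essentially the same approach as the paper's: both reduce the first assertion to the observation that edges between distinct fibers $C_u, C_v$ exist exactly when $uv \in E(G)$, and both get the size bound from the fact that each fiber has size $r$. The only cosmetic difference is that you argue the contrapositive where the paper argues directly.
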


\begin{proof}
  Given an independent set 
  $I\subset G'$, consider $i,j\in I$. Clearly, if $f(i)\neq f(j)$, then
  $f(i),f(j)$ are not adjacent in $G$, by the construction. 
  Further, if $f(i)=f(j)$, then $i,j$ must belong to some copy of 
  $\bar{K_r}$ in $G'$.
\end{proof}

\begin{proof}[Proof of Theorem \ref{thmtrifreegrphsupper}]
  We shall show that $G'$ is the required graph.
  By Claim \ref{lemmaconstr},
  \begin{align*}
    i(G') &\leq \sum_{I \subset G: I \mbox{ ind. set}} 
    2^{r|I|} \\
    &\leq \alpha \binom{n}{\alpha} 2^{\frac{\alpha\ln n}{2}} \\
    &\leq e^{\ln \alpha + \alpha\ln(ne/\alpha) + \frac{\alpha \ln n (\ln 2)}{2}}.
  \end{align*}
To finish the proof, note that
\begin{align*}
\ln \alpha + \alpha\ln(ne/\alpha) + \frac{\alpha \ln n (\ln 2)}{2}
&= (1+o(1))\frac{\alpha \ln n}{2} + \frac{\alpha \ln n (\ln 2)}{2} \\
&= (\frac{1+\ln 2}{2} + o(1))\alpha \ln n \\
&\leq (\frac{1+\ln 2}{2} + o(1))\sqrt{2n \ln n} \ln n \\
&= (1 + \ln 2 + o(1))\sqrt{N} \ln N.
\end{align*}
\end{proof}
    
\bibliographystyle{amsplain}
\bibliography{bib}
\end{document}